\documentclass[12pt]{amsart}
\usepackage[english]{babel}
\usepackage{graphicx}
\usepackage{amsmath}
\usepackage{amsfonts}
\usepackage{amssymb}
\usepackage{amsthm}
\usepackage{hyperref}
\usepackage[T1]{fontenc}
\usepackage{color}
\usepackage{mathtools}
\usepackage{comment}
\usepackage{enumerate}

\addtolength{\topmargin}{-2cm} \addtolength{\textheight}{3cm}
\addtolength{\oddsidemargin}{-1cm} \addtolength{\textwidth}{2cm}
\addtolength{\evensidemargin}{-1cm}

\input xy
\xyoption{all}

\newtheorem{thm}{Theorem}[section]

\newtheorem{lem}[thm]{Lemma}
\newtheorem{pro}[thm]{Proposition}

\theoremstyle{definition}
\newtheorem{defi}[thm]{Definition}

\newtheoremstyle{remarque}{}{}{}{}{\it}{.}{\newline}{}
\theoremstyle{remarque}
\newtheorem*{rem}{Remark}


\newcommand{\asd}[5]{%
\setbox1=\hbox{\ensuremath{^{#1}}}%
\setbox2=\hbox{\ensuremath{_{#2}}}%
\setbox5=\hbox{\ensuremath{#5}}%
\hspace{\ifnum\wd1>\wd2\wd1\else\wd2\fi}%
\ensuremath{\copy5^{\hspace{-\wd1}\hspace{-\wd5}#1\hspace{\wd5}#3}%
_{\hspace{-\wd2}\hspace{-\wd5}#2\hspace{\wd5}#4}%
}}

\usepackage[OT2,T1]{fontenc}
\DeclareSymbolFont{cyrletters}{OT2}{wncyr}{m}{n}
\DeclareMathSymbol{\Sha}{\mathalpha}{cyrletters}{"58}
\DeclareMathSymbol{\Brusse}{\mathalpha}{cyrletters}{"42}



\newcommand{\F}{\mathbb{F}}
\newcommand{\id}{\mathrm{id}}
\renewcommand{\ker}{\mathrm{Ker}}

\renewcommand{\hom}{\mathrm{Hom}}
\newcommand{\ext}{\mathrm{Ext}}

\newcommand{\aut}{\mathrm{Aut}}
\newcommand{\out}{\mathrm{Out}}
\renewcommand{\int}{\mathrm{Int}}

\newcommand{\coc}{\mathrm{coc}}

\usepackage{marvosym}

\title{On extensions of algebraic groups}
\author{Mathieu Florence}
\address{Mathieu Florence: Sorbonne Universit\'e, 4 place Jussieu, 75005 Paris, France.}
\email{mathieu.florence@imj-prg.fr}
\author{Giancarlo Lucchini Arteche}
\address{Giancarlo Lucchini Arteche: Departamento de Matem\'aticas, Facultad de Ciencias, Universidad de Chile, Las Palmeras 3425, \~Nu\~noa, Santiago, Chile.}
\email{luco@uchile.cl}
\thanks{The second author was partially supported by CONICYT via Fondecyt Grant 11170016 and PAI Grant 79170034.}

\date{}

\begin{document}

\maketitle

\begin{abstract}
    We extend to the context of algebraic groups a classical result on extensions of abstract groups relating the set of isomorphism classes of extensions of $G$ by $H$ with that of extensions of $G$ by the center $Z$ of $H$. The proof should be easily generalizable to other contexts. We also study the subset of classes of split extensions and give a quick application by proving a finiteness result on these sets over a finite field.\\
    \textbf{MSC classes:} primary 14L99, 20G15, secondary 18D35\\
    \textbf{Keywords:} Algebraic groups, group extensions.
\end{abstract}

\section{Introduction}
Let $G,H$ be abstract groups and let
\[1\to H\to E\to G\to 1,\]
be a group extension of $G$ by $H$. It is well known that the action of $E$ on $H$ by conjugation induces a morphism $\kappa:G\to\out(H)=\aut(H)/\int(H)$ that is called an \emph{outer action} of $G$ on $H$. One can then consider the set $\ext(G,H,\kappa)$ of isomorphism classes of extensions inducing the outer action $\kappa$. In the particular case where $H=A$ is an abelian group, an outer action becomes an action (since $\int(A)$ is trivial) and $\ext(G,A,\kappa)$ gets a natural group structure by means of the Baer sum. Another way of seeing this group structure is by noting that $\ext(G,A,\kappa)$ is naturally isomorphic to the cohomology group $H^2(G,A)$ (cf.~for instance \cite[Thm.~1.2.4]{NSW}).

In the case of a general group $H$, another classical result from group theory (but arguably less known) is the following: If $Z$ denotes the center of $H$, then an outer action $\kappa$ of $G$ on $H$ induces an action $\kappa_Z$ of $G$ on $Z$ and the set $\ext(G,Z,\kappa_Z)\simeq H^2(G,Z)$ acts \emph{simply transitively} on $\ext(G,H,\kappa)$ (cf.~for instance \cite[IV, Thm.~8.8]{MacLane}). Such a result can be easily generalized to a profinite group setting (for instance, in nonabelian Galois cohomology, cf. \cite{SpringerH2} or \cite{Borovoi93}), or even to a group-scheme setting by means of (nonabelian) Hochschild cohomology (cf.~\cite{DemarcheHochschild}). However, this last case follows the explicit cocycle approach from group cohomology and thus it can only take into account extensions \emph{that admit a scheme-theoretic section} $G\to E$.

Thus, if one wants to study general extensions of, say, algebraic groups or group schemes, one is bound to use a different approach. An attempt to do this for algebraic groups was done by the second author in \cite{GLA-Ext} when the group $G$ is finite, but he used an \emph{ad-hoc} cocycle approach that cannot be generalized to arbitrary algebraic groups.\\

In this paper, we study this situation for algebraic groups (that is, group schemes of finite type over a base field) with a point of view as general as possible. In particular, our proof should be applicable to group objects in other categories (for instance, group functors) without too much work. It is in this context that we (re)prove the classical result from group theory (for the notations, see section \ref{sec extensions}):

\begin{thm}\label{main thm intro}
Let $G,H$ be algebraic groups over a field $k$, let $Z$ denote the center of $H$ and let
\[1\to H\to E\to G\to 1,\]
be an extension. Then $G$ acts naturally on $Z$ and the group $\ext(G,Z,E)$ acts simply transitively on the set $\ext(G,H,E)$.
\end{thm}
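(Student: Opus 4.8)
The plan is to realize $\ext(G,H,E)$ as a torsor by transporting the problem to a single auxiliary extension group and performing a Baer-type difference construction there, so as to avoid any recourse to scheme-theoretic sections or explicit cocycles. Throughout I work with fppf sheaves of groups, so that the functors $\AUT(H)$, $\OUT(H)$, $\INT(H)$, the quotients appearing below, and the notion of an epimorphism all make sense; representability of the objects that must be algebraic groups is to be invoked from the framework set up earlier, and where it is not available the argument still runs at the level of fppf sheaves. The action of $G$ on $Z$ is the easy first point: since the center is characteristic, the natural map $\AUT(H)\to\AUT(Z)$ kills $\INT(H)$ and so factors through $\OUT(H)$; composing with the outer action $\kappa\colon G\to\OUT(H)$ attached to $E$ yields the action $\kappa_Z\colon G\to\AUT(Z)$ for which $\ext(G,Z,E)=\ext(G,Z,\kappa_Z)\simeq H^2(G,Z)$ is formed.

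\emph{Key step.} I would introduce $\tilde G:=G\times_{\OUT(H)}\AUT(H)$, which sits in an extension $1\to\INT(H)\to\tilde G\to G\to 1$ with $\INT(H)\cong H/Z$. For any extension $E'$ of $G$ by $H$ inducing $\kappa$, its conjugation action on $H$ gives a map $E'\to\AUT(H)$ lying over $\kappa$, hence a map $E'\to\tilde G$ whose kernel is the centralizer of $H$ in $H$, namely $Z$. The crucial claim is that this realizes $E'$ as an extension $1\to Z\to E'\to\tilde G\to 1$ which, restricted along $\INT(H)=H/Z\hookrightarrow\tilde G$, recovers the canonical extension $\eta\colon 1\to Z\to H\to H/Z\to 1$. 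Establishing that $E'\to\tilde G$ is an fppf epimorphism with kernel exactly $Z$ is precisely where the outer-action hypothesis is used, and it is the conceptual heart of the argument.

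\emph{Action and difference.} Given such an $E'$ and a class $[F]\in\ext(G,Z,E)$, I define $E'\cdot[F]$ as the contracted product $(E'\times_G F)/Z^-$, where $Z^-=\{(z^{-1},z)\}\subseteq H\times Z\subseteq E'\times_G F$; this subgroup is normal precisely because $E'$ and $F$ induce the \emph{same} action $\kappa_Z$ on $Z$, and $(H\times Z)/Z^-\cong H$ via $(h,z)\mapsto hz$, so the result is again an extension of $G$ by $H$ with outer action $\kappa$. Conversely, given two such extensions $E_1',E_2'$, I use the maps of the Key step to form the Baer difference $D:=(E_1'\times_{\tilde G}E_2')/\Delta Z$, an extension of $\tilde G$ by $(Z\times Z)/\Delta Z\cong Z$. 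Because elements of $E_1'$ and $E_2'$ lying over the same point of $\tilde G$ induce the \emph{same} automorphism of $H$ (not merely the same outer automorphism), the image $S$ of the diagonal $\{(h,h)\}$ is a copy of $H/Z$ that is normal in $D$; hence $D$ descends to an extension $F:=D/S$ of $\tilde G/(H/Z)=G$ by $Z$, producing a class in $\ext(G,Z,E)$.

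\emph{Conclusion.} It then remains to verify the torsor axioms: that $\cdot$ is a well-defined action compatible with the Baer-sum group law on $\ext(G,Z,E)$, and that the difference map just described is inverse to it, whence the action is simultaneously free and transitive. I expect these to be formal diagram chases once both constructions are in place, and insensitive to the ambient category. The main obstacle is therefore not the combinatorial torsor argument but the Key step together with the existence of the quotients $(E'\times_G F)/Z^-$, $D$, and $D/S$ as algebraic groups: exhibiting $E'\to\tilde G$ as an epimorphism of fppf sheaves with kernel $Z$, and ensuring $\tilde G$ and these quotients are representable, are the points at which the passage from abstract groups to group schemes must be handled with care.
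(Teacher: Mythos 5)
Your proposal is correct in substance, and after unwinding the definitions your two constructions coincide with the paper's: your Key step says every extension $E'$ inducing $\kappa$ satisfies $E'/Z\simeq\tilde G:=G\times_{\OUT(H)}\AUT(H)$, so your fiber product $E_1'\times_{\tilde G}E_2'$ is the paper's $E_{12}=E_1\times_{E_0}E_2$ over the common quotient $E_0=E/Z$, your two-step quotient $D/S=(E_1'\times_{\tilde G}E_2')/\Delta(H)$ is the paper's pushforward $\nabla_*(E_{12})$ along $\nabla:H\times_{H/Z}H\to Z$, and your contracted product $(E'\times_G F)/Z^-$ is the paper's $\mu_*(E'\times_G F)$. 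The genuine difference is foundational: the paper deliberately never forms $\AUT(H)$ or $\OUT(H)$, precisely because these are not algebraic groups; it encodes ``same outer action'' relationally, as an isomorphism $E_1/Z\simeq E_2/Z$ of extensions compatible with the actions on $H$, with Lemma \ref{lem Delta H normal} (normality of $\Delta(H)$ in $E_{12}$) as the only tool. You instead reintroduce $\AUT(H)$ as an fppf sheaf and must then prove the Key step; this buys a classical Eilenberg--Mac Lane-style picture and a notion of $\kappa$ independent of a reference extension, at the cost of sheaf-theoretic overhead and your representability worries --- which in fact dissolve, since the Key step applied to $E$ itself gives $\tilde G\simeq E/Z$, an algebraic group, so all your fiber products and quotients by closed normal subgroup schemes exist over a field; you should, however, explicitly check that your $\kappa$-based equivalence agrees with the paper's relational definition, which is exactly what that identification provides. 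One caveat: the torsor axioms you defer as ``formal diagram chases'' occupy half the paper's proof; freeness in particular rests on the non-formal computation $E_1\times_{E_0}E_1\simeq Z\rtimes E_1$ (so that its $\nabla$-pushforward is the split extension $Z\rtimes G$) together with the identity $\nabla\circ(\id,\mu)=\mu\circ(\nabla,\id)$ on three-fold fiber products, so expect a genuine, if elementary, verification there.
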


By center, we mean the schematic center all throughout the article. The group $\ext(G,Z,E)$ is simply the set of extensions of $G$ by $Z$ inducing the same $G$-action as the extension $E$ by conjugation (we will recall its group structure in Section \ref{sec extensions}). The set $\ext(G,H,E)$ is the set of extensions ``inducing the same outer action'' as $E$. However, the classical definition of outer actions is not practical for generalizations and we want to avoid any \textit{ad-hoc} definitions.\\

A natural consequence of our main theorem is that we may reduce the study of extensions of algebraic groups to that of extensions by abelian algebraic groups, which are much easier to work with. As an example of this, we prove the finiteness of the sets $\ext(G,H,E)$ for algebraic groups over a finite field in Section \ref{section application}.

\subsection*{Acknowledgements}
The authors would like to thank Mikhail Borovoi and Michel Brion for their comments and suggestions.

\section{Preliminaries}

We recall here some basics on group extensions and fiber products that will be necessary in order to state and prove our main theorem. Our philosophy here is to manipulate extensions of nonabelian groups using mainly the following notions: fiber products, and taking quotients by normal subgroups.\\

Throughout this article, we will say that a morphism $\phi :G\to H$ of algebraic groups is \emph{surjective} if it is an epimorphism for the faithfully flat topology. In that case $\phi$ is faithfully flat and we have an exact sequence of algebraic groups
\[1\to\ker (\phi)\to G\to H\to 1.\]
As a general reference on algebraic groups, the reader may consult Milne's book \cite{MilneAG}.

\subsection{Fiber products}
Let us recall a basic result on fiber products that will be useful later. Let $G_1,G_2,H$ be algebraic groups over a field $k$ and consider morphisms $\phi_i:G_i\to H$ for $i=1,2$. Then we have the following result on the corresponding fiber product (which is defined via the classical universal property).

\begin{pro}\label{prop prod fib}
The fiber product $G_1\times_H G_2$ is the subgroup of $G_1\times G_2$ given by $(\phi_1\times\phi_2)^{-1}(\Delta(H))$, where $\Delta:H\to H\times H$ is the diagonal morphism. In particular, if $\phi_i$ is surjective for $i=1,2$, there is an exact sequence
\[1\to \ker(\phi_1)\times\ker(\phi_2)\to G_1\times_H G_2\to H\to 1.\]
\end{pro}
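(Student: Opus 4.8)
The plan is to argue entirely via the functor of points, invoking Yoneda's lemma to reduce both claims to elementary set-theoretic verifications on $T$-points for an arbitrary $k$-scheme $T$. For the first assertion, I would spell out the $T$-points of the fiber product using its universal property: by definition
\[(G_1\times_H G_2)(T)=\{(g_1,g_2)\in G_1(T)\times G_2(T)\mid \phi_1(g_1)=\phi_2(g_2)\}.\]
Writing $\psi=\phi_1\times\phi_2:G_1\times G_2\to H\times H$, a $T$-point $(g_1,g_2)$ lies in $\psi^{-1}(\Delta(H))$ exactly when $(\phi_1(g_1),\phi_2(g_2))$ factors through the diagonal, i.e.\ when $\phi_1(g_1)=\phi_2(g_2)$. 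The two functors therefore coincide, and Yoneda gives the claimed equality of subschemes of $G_1\times G_2$. Since $H$ is of finite type over a field it is separated, so $\Delta$ is a closed immersion and $\Delta(H)$ is a closed subgroup of $H\times H$; being the preimage of a subgroup under the group homomorphism $\psi$, the fiber product is then automatically a closed subgroup scheme of $G_1\times G_2$, which also makes its group structure transparent.

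For the exact sequence, I would first observe that the two composites $G_1\times_H G_2\to G_i\xrightarrow{\phi_i}H$ agree, by the very definition of the fiber product; call this common morphism $\pi$. Computing its kernel on $T$-points, a pair $(g_1,g_2)$ maps to the identity precisely when $\phi_1(g_1)=\phi_2(g_2)=1$, that is when $g_i\in\ker(\phi_i)(T)$ for $i=1,2$; hence $\ker(\pi)=\ker(\phi_1)\times\ker(\phi_2)$, which gives exactness on the left and in the middle.

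The only real content is the surjectivity of $\pi$, which I expect to be the main (though mild) obstacle. Here I would use that the projection $\mathrm{pr}_2:G_1\times_H G_2\to G_2$ is the base change of $\phi_1$ along $\phi_2$; since $\phi_1$ is surjective, hence faithfully flat, its base change $\mathrm{pr}_2$ is again faithfully flat. As $\pi=\phi_2\circ\mathrm{pr}_2$ is a composite of two faithfully flat morphisms, it is faithfully flat, and in particular an epimorphism for the faithfully flat topology, i.e.\ surjective in the sense fixed above. This yields exactness on the right and completes the proof.
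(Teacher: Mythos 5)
Your proof is correct, and there is nothing in the paper to compare it against: the authors declare this proposition ``an easy exercise left to the reader'' and give no argument at all. Your functor-of-points verification — Yoneda for the identification of $G_1\times_H G_2$ with $(\phi_1\times\phi_2)^{-1}(\Delta(H))$ (where separatedness of $H$ makes $\Delta(H)$ a closed subgroup, so the preimage is a closed subgroup scheme), the $T$-point computation $\ker(\pi)=\ker(\phi_1)\times\ker(\phi_2)$, and the observation that $\mathrm{pr}_2$ is the base change of $\phi_1$ along $\phi_2$, so that $\pi=\phi_2\circ\mathrm{pr}_2$ is a composite of faithfully flat morphisms and hence an epimorphism for the faithfully flat topology — is exactly the standard argument the authors intend, and you correctly isolate the one step with genuine content (surjectivity of $\pi$ in the paper's fppf sense) rather than treating it as formal.
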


The proof is an easy exercise left to the reader.

\subsection{Extensions}\label{sec extensions}
Let $G,H$ be algebraic groups over a field $k$.

\begin{defi}
An extension $E$ of $G$ by $H$ is an exact sequence
\[1\to H\to E\to G\to 1.\]
A morphism of extensions is an isomorphism $\phi:E\to E'$ that fits into a commutative diagram
\[\xymatrix{
1 \ar[r] & H \ar[r] \ar@{=}[d] & E \ar[r] \ar[d]^\phi_\sim & G \ar[r] \ar@{=}[d] & 1 \\
1 \ar[r] & H \ar[r] & E' \ar[r] & G \ar[r] & 1. \\
}\]
An extension $E$ is said to be \emph{split} if there exists a morphism of algebraic groups $s:G\to E$ that splits $E\to G$.
\end{defi}

Extensions are functorial in the following sense. Let
\[1\to H\to E\to G\to 1\]
be an extension, and let $f:G' \to G$ be a morphism. We can then form the pullback diagram
\[ \xymatrix{ 1 \ar[r] &  H \ar[r] \ar@{=}[d]  & f^*(E)  \ar[r] \ar[d]  &  G' \ar[d]^f \ar[r] & 1 \\ 1 \ar[r] &  H \ar[r]   & E  \ar[r]   &  G \ar[r] & 1,}\]
where $f^*(E)$ is defined as the fiber product $ E \times_G G'$.

Consider now a morphism $f:H \to H'$. One would like to define the pushforward $f_*(E)$. This is not well defined in general. However, when the morphism $f$ is surjective and $\ker(f)$ is normal in $E$, one can define $f_*(E)$ as 
\[f_*(E):= [1\to H'\to E/\ker(f) \to G\to 1]. \]
Note that $\ker(f)$ is always normal in $E$ if it is characteristic in $H$.\\

Recall that if $A=H$ is \emph{abelian}, we may consider the set $\ext(G,A)$ of isomorphism classes of extensions of $G$ by $A$ inducing the same action of $G$ on $A$ (cf.~\cite[Exp.~17, App.~A]{SGA3}). It is an abelian group for the Baer sum. Let us recall its construction. Let
\[1\to A\to E_i \to G\to 1,\]
be extensions of $G$ by $A$ for $i=1,2$ inducing the same $G$-action on $A$. We may then consider the fiber product $E:=E_1\times_G E_2$, which fits naturally into an exact sequence (cf.~Proposition \ref{prop prod fib})
\[1\to A\times A\to E\to G\to 1.\]
Since the action of $G$ on $A$ induced by $E_1$ and $E_2$ is the same, it is easy to see that the sum and difference morphisms
\[+:A\times A\to A\qquad\text{and}\qquad -:A\times A\to A,\]
are $G$-equivariant and thus their respective kernels are normal in $E$. This means that we may consider the pushforwards $+_*(E)$ and $-_*(E)$, which we denote by $E_1+E_2$ and $E_1-E_2$. The classes of these extensions correspond respectively to the (Baer) sum $[E_1]+[E_2]$ and the difference $[E_1]-[E_2]$ in $\ext(G,A)$.

\subsection{Automorphisms of extensions}
Let $G,H$ be algebraic groups over a field $k$ and denote by $Z$ the (schematic) center of $H$. Since $Z$ is characteristic in $H$, it is normal in $E$. In particular, $E$  acts naturally on $Z$ by conjugation and this action clearly factors through $G$.

\begin{pro}\label{prop aut ext}
Consider an extension
\[1 \to H \to E \xrightarrow{\pi} G \to 1,\]
and the natural $G$-action on $Z$ induced by $E$. Denote by $Z_0^1(G,Z)$ the group of Hochschild 1-cocycles (cf. \cite[II, \S3.1]{DG}). We have a canonical group isomorphism
\begin{align*}
Z_0^1(G,Z) &\to \aut(E),\\
\phi &\mapsto \mu\circ(\phi\circ\pi,\id),
\end{align*}
where $\mu:E\times E\to E$ denotes the group multiplication. In particular, if $E$ is a central extension, we get $\hom (G,H)\xrightarrow{\sim}\aut (E)$.
\end{pro}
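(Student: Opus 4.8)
The plan is to work throughout with the functor of points, checking all identities on $R$-points for every $k$-algebra $R$ and invoking Yoneda to promote them to equalities of morphisms of schemes; the surjectivity of $\pi$ (as an fppf epimorphism) and the identification of $G$ with the fppf quotient $E/H$ will be the only nonformal inputs. Write $\Phi_\phi := \mu\circ(\phi\circ\pi,\id)$ for the candidate automorphism, i.e.\ $\Phi_\phi(e)=\phi(\pi(e))\cdot e$ on points. First I would check that $\Phi_\phi$ is a well-defined endomorphism of the extension. That it is a group homomorphism follows by inserting $e_1^{-1}e_1$ into $\Phi_\phi(e_1)\Phi_\phi(e_2)$ and using that the $G$-action on $Z$ is exactly conjugation by a lift in $E$, so that the cocycle identity $\phi(g_1g_2)=\phi(g_1)\cdot{}^{g_1}\phi(g_2)$ converts the product into $\Phi_\phi(e_1e_2)$. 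Since $\phi$ takes values in $Z\subseteq H=\ker(\pi)$ we get $\pi\circ\Phi_\phi=\pi$, and since $\phi(1)=1$ (a consequence of the cocycle relation) we get $\Phi_\phi|_H=\id$. Hence $\Phi_\phi$ is a morphism of extensions.

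Next I would establish that $\phi\mapsto\Phi_\phi$ is a homomorphism of groups and deduce invertibility for free. A direct computation gives $\Phi_\phi\circ\Phi_\psi=\Phi_{\phi\psi}$, where $\phi\psi$ denotes the pointwise product (the group law on $Z_0^1(G,Z)$), precisely because $\pi\circ\Phi_\psi=\pi$. Taking $\psi$ to be the inverse cocycle and noting that the trivial cocycle yields $\Phi_1=\id_E$, this shows each $\Phi_\phi$ is invertible, hence genuinely lies in $\aut(E)$, and that the assignment respects the group laws. Injectivity is then immediate: if $\Phi_\phi=\id_E$, then $\phi\circ\pi=1$, and surjectivity of $\pi$ forces $\phi=1$.

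The crux is surjectivity, and this is where I expect the main difficulty. Given $\Psi\in\aut(E)$, set $c(e):=\Psi(e)\cdot e^{-1}$. Because $\Psi$ covers $\id_G$ we have $\pi(c(e))=1$, so $c$ factors through $H$; because $\Psi|_H=\id$ one computes $c(eh)=\Psi(e)h(eh)^{-1}=c(e)$ for $h\in H$, so $c$ is right $H$-invariant and therefore descends along the fppf quotient $\pi:E\to G$ to a morphism $\phi:G\to H$ with $c=\phi\circ\pi$. The delicate point is to see that $\phi$ actually lands in the center $Z$. For this I would expand the homomorphism identity $\Psi(e_1e_2)=\Psi(e_1)\Psi(e_2)$, which after cancellation reads $\phi(g_1g_2)=\phi(g_1)\cdot e_1\phi(g_2)e_1^{-1}$ for any lift $e_1$ of $g_1$; since the left-hand side does not depend on the chosen lift, comparing $e_1$ with $e_1h$ yields $h\phi(g_2)h^{-1}=\phi(g_2)$ for all $h\in H$, i.e.\ $\phi(g_2)\in Z$. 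Scheme-theoretically this says the two morphisms $(g,h)\mapsto\phi(g)h$ and $(g,h)\mapsto h\phi(g)$ coincide, so $\phi$ factors through $Z$. Once $\phi:G\to Z$, the displayed identity becomes exactly the cocycle relation, so $\phi\in Z_0^1(G,Z)$, and by construction $\Phi_\phi=\Psi$.

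Finally, the assertion about central extensions is a formality: if $E$ is central, then $H$ is abelian with $Z=H$ and the induced $G$-action on $H$ is trivial, so the cocycle condition degenerates to $\phi(g_1g_2)=\phi(g_1)\phi(g_2)$, identifying $Z_0^1(G,H)$ with $\hom(G,H)$.
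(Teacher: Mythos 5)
Your proof is correct and follows essentially the same route as the paper's: write an automorphism $\Psi$ as $\Psi(e)=\phi(\pi(e))\cdot e$, use the homomorphism identity specialized to elements of $H$ to force $\phi$ to take values in the schematic center $Z$ and satisfy the Hochschild cocycle relation, and verify the converse and the group-law compatibility by direct computation. The only (welcome) difference is that you make explicit the fppf-descent step showing $c(e)=\Psi(e)e^{-1}$ descends along $\pi$ to a morphism $G\to H$, which the paper asserts without comment.
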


\begin{proof}
Let $f:E \to E$ be an automorphism of the extension $E$. Since $f$ induces the identity on both $H$ and $G$, it can be written as $f(x)=\phi(\overline x)x$, where $x\mapsto \overline x$ denotes the projection $\pi:E\to G$, for a certain morphism of $k$-schemes $\phi:G\to H$ such that $\phi(\bar e)=e$. We see then that $f=\mu\circ(\phi\circ\pi,\id)$.

We only have left to prove that $\phi$ is a Hochschild 1-cocycle with values in $Z$. For $x,y \in E$, we compute:
\[\phi(\overline{xy})xy=f(xy)=f(x)f(y)=\phi(\overline x)x\phi(\overline y)y,\] whence
\[\phi(\overline{xy})x=\phi(\overline x)x\phi(\overline y).\]
Taking $x$ in $H$, we see that $\phi(\bar y)$ commutes with $x$ and hence $\phi$ takes values in $Z$. This last relation also shows that $\phi: G \to Z$ is a $1$-cocycle for the action by conjugation. On the other hand, one easily checks that any $1$-cocycle defines an automorphism of $E$ via the formula above and, by associativity of $\mu$, that the sum of two cocycles maps to the composition of the corresponding automorphisms. The proposition is proved.
\end{proof}

\begin{rem}
This proof can of course be rewritten without the use of points, but it rapidly becomes cumbersome and it does not help its understanding. Moreover, this result is not used in the proof of our main theorem.
\end{rem}

\subsection{Outer actions}\label{sec out}
We will now define the notion of outer action (in a relative way) in terms of extensions. Let $G,H$ be algebraic groups, let $Z$ be the center of $H$ and let $E_1,E_2$ be extensions of $G$ by $H$. Since $Z$ is characteristic in $H$, it is normal in both $E_i$'s. We can thus consider, for $i=1,2$, the extensions 
\[1\to H/Z\to E_i/Z\to G\to 1.\]
Note that $E_i$ acts on $H$ by conjugation and that this action factors through $E_i/Z$, so that $E_i/Z$ acts naturally on $H$, hence on $Z$ as well.

\begin{defi}\label{defi action ext}
We say that $E_1$ and $E_2$ \emph{induce the same outer action} of $G$ on $H$ if there exists an isomorphism $\varphi$ between $E_1/Z$ and $E_2/Z$ as extensions of $G$ by $H/Z$, compatible with the natural actions on $H$. 

We define $\ext(G,H,E_1)$ as the set of isomorphism classes of extensions of $G$ by $H$ inducing the same outer action as $E_1$. Note that when $H=A$ is abelian, we recover the group $\ext(G,A)$ for the $G$-action obtained by conjugation in $E_1$.
\end{defi}

The advantage of this point of view is that it avoids the use of automorphism groups and replaces it with the notion of action, which is easier to define in a general setting (in particular in the context of algebraic groups). Indeed, in general automorphism groups are not group objects in the category one is working with and thus one has to give \textit{ad-hoc} definitions of outer actions in order to define the sets of extensions.

\begin{rem}
We have just replaced automorphisms by actions. If needed, these can be replaced by the notion of normal subgroups as follows. Fixing an isomorphism of extensions $E_1/Z\simeq E_2/Z$ amounts to giving a certain extension $E_0$ of $G$ by $H/Z$ and morphisms of extensions
\[\xymatrix{
1 \ar[r] & H \ar[r] \ar[d]_\pi & E_1 \ar[r] \ar[d] & G \ar[r] \ar@{=}[d] & 1 \\
1 \ar[r] & H/Z \ar[r] & E_0 \ar[r] & G \ar[r] & 1. \\
1 \ar[r] & H \ar[r] \ar[u]^\pi & E_2 \ar[r] \ar[u] & G \ar[r] \ar@{=}[u] & 1,
}\]
where $\pi:H\to H/Z$ is the natural projection (in particular, $E_0$ is unique up to isomorphism of extensions). Then we may consider the fiber product $E:=E_1\times_{E_0} E_2$, which fits into an exact sequence
\[1\to H\times_{H/Z}H\to E\to G\to 1,\]
as can be easily proved using Proposition \ref{prop prod fib}. If we consider the subgroup $\Delta(H)$ corresponding to the image of the diagonal embedding $\Delta:H\to H\times_{H/Z}H$, then we see the following.
\end{rem}

\begin{lem}\label{lem Delta H normal}
$E_1$ and $E_2$ induce the same action \emph{if and only if} $\Delta(H)$ is normal in $E$.
\end{lem}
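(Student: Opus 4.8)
The plan is to reduce both sides of the claimed equivalence to one and the same identity about conjugation, tested on generalized points. I would work with the explicit description of $E=E_1\times_{E_0}E_2$ as a subgroup of $E_1\times E_2$ coming from Proposition \ref{prop prod fib}: for a $k$-algebra $R$, an $R$-point of $E$ is a pair $(x_1,x_2)\in E_1(R)\times E_2(R)$ whose images in $E_0(R)$ agree. Under the identifications $E_1/Z\simeq E_0\simeq E_2/Z$, this says precisely that the class $\overline{x}_1$ of $x_1$ in $(E_1/Z)(R)$ is carried by $\varphi$ to the class $\overline{x}_2$ of $x_2$ in $(E_2/Z)(R)$. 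The subgroup $\Delta(H)$ consists of the pairs $(h,h)$ with $h\in H(R)$.

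First I would record that, since $Z$ is the center of $H$, conjugation by $Z$ acts trivially on $H$, so the conjugation action of each $E_i$ on $H$ factors through $E_i/Z$. Thus $E_0$ acts on $H$ through each of the two identifications, and Definition \ref{defi action ext} asks exactly that these two actions coincide, i.e.\ that for every $(x_1,x_2)\in E(R)$ one has $x_1hx_1^{-1}=x_2hx_2^{-1}$ for all $h\in H(R)$.

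Next comes the one-line computation: for $(x_1,x_2)\in E(R)$ and $h\in H(R)$,
\[(x_1,x_2)\,(h,h)\,(x_1,x_2)^{-1}=(x_1hx_1^{-1},\,x_2hx_2^{-1}).\]
Because $H$ is normal in each $E_i$, both components lie in $H(R)$; and because the images of $x_1$ and $x_2$ in $E_0(R)$ coincide, the two components have the same image in $(H/Z)(R)$, so the conjugate automatically lands in $(H\times_{H/Z}H)(R)=\ker(E\to G)(R)$. Hence this conjugate lies in $\Delta(H)(R)$ \emph{if and only if} its two components are equal, i.e.\ iff $x_1hx_1^{-1}=x_2hx_2^{-1}$. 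Since normality of $\Delta(H)$ in $E$ can be tested on generalized points (the conjugation morphism factors through $\Delta(H)$ iff it does so on $R$-points for all $R$), we conclude that $\Delta(H)$ is normal in $E$ exactly when this identity holds at every point, which is precisely the compatibility condition defining the same outer action.

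The main obstacle is not conceptual but one of bookkeeping: I must phrase both ``normal'' and ``compatible action'' functorially so that the pointwise computation is an honest proof, and I must check that the conjugate never leaves $\ker(E\to G)$, so that equality of the two diagonal components is the \emph{only} remaining constraint. A final sentence is worth adding to match Definition \ref{defi action ext}: the argument gives the equivalence for the fixed isomorphism encoded by $E_0$, and the definition only requires the existence of one compatible isomorphism, which is exactly what a normal $\Delta(H)$ provides.
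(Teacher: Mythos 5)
Your proof is correct and takes exactly the route the paper intends: the paper leaves this lemma as an ``easy exercise using Proposition \ref{prop prod fib}'', and your functor-of-points computation --- describing $E_1\times_{E_0}E_2$ inside $E_1\times E_2$ via Proposition \ref{prop prod fib}, conjugating $(h,h)$ by $(x_1,x_2)$, and observing that the conjugate always lands in $H\times_{H/Z}H$ so that membership in $\Delta(H)$ reduces to $x_1hx_1^{-1}=x_2hx_2^{-1}$ --- is precisely that exercise carried out. Your closing remarks on testing normality and equality of actions on $R$-points (the latter using that $E\to E_0$ is an epimorphism) and on the lemma being relative to the fixed isomorphism encoded by $E_0$ correctly handle the only delicate bookkeeping points.
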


Once again, this is an easy exercise using Proposition \ref{prop prod fib}. We leave the details to the reader.\\

In the classical setting of abstract groups, an outer action is defined as a morphism $\kappa:G\to \out(H)$ and an extension $E$ of $G$ by $H$ defines naturally an outer action via the commutative diagram
\[\xymatrix{
1 \ar[r] & H\ar[r] \ar@{->>}[d]_{\mathrm{can}} & E \ar[r] \ar[d]_{c} & G \ar[r] \ar[d]^\kappa & 1 \\
1\ar[r] & \int(H) \ar[r] & \aut(H) \ar[r]^\pi & \out(H) \ar[r] & 1,
}\]
where $c$ denotes conjugation in $E$. One can prove that in this context $E/Z$ corresponds to the fiber product $\aut(H)\times_{\out(H)} G$ and hence the image of $E/Z$ in $\aut(H)$ is completely determined by the datum of $\kappa:G\to\out(H)$. We deduce that two extensions having the same outer action in the classical setting will have the same image in $\aut(H)$, giving us an isomorphism $E_1/Z\simeq E_2/Z$ which evidently induces the same action.

On the other hand, if two extensions $E_1,E_2$ have isomorphic quotients by $Z$ and induce the same action on $H$, then clearly they both have the same image in $\aut(H)$ and thus induce the same morphism $\kappa:G\to\out(G)$ in the context of abstract groups.

\subsection{The action of $\ext(G,Z,E)$ on the set $\ext(G,H,E)$}\label{sec action of exts}
Let $G,H$ be algebraic groups and let $Z$ be the center of $H$ as above. Starting from an extension $E$ of $G$ by $H$, we get an action of $E/Z$ on $Z$ as we saw before, which factors through an action of $G$ on $Z$. Define $\ext(G,Z,E)$ to be the group of extensions of $G$ by $Z$ corresponding to this $G$-action. The purpose of this section is to define an action of $\ext(G,Z,E)$ on $\ext(G,H,E)$.\\

Let us consider then an extension
\[1\to Z\to E' \to G \to 1,\]
representing a class in $\ext(G,Z,E)$, i.e. inducing the same $G$-action on $Z$ as $E$. Consider the fiber product $E\times_G E'$. By Proposition \ref{prop prod fib}, this fits into an exact sequence
\begin{equation}\label{eqn def action}
1\to H\times Z \to E\times_G E'\to G \to 1.
\end{equation}
If we consider then the multiplication morphism $\mu:H\times Z\to H$, which is surjective and whose kernel is easily seen to be normal in $E\times_G E'$ (use for instance Proposition 2.1), we get by pushforward an extension
\[1\to H\to \mu_*(E\times_G E')\to G\to 1,\]
which induces the same outer action than $E$. Indeed, consider the extension
\[1\to H/Z\to E_0\to G\to 1,\]
obtained by pushing extension \eqref{eqn def action} via the natural arrow $H\times Z\to H/Z$. It is easy to see that there is a commutative diagram of extensions (where we omit the morphisms induced on the subgroups and quotients)
\[\xymatrix{ E\times_G E' \ar[r]^{\mu^*} \ar[d]_{p_1} & \mu_*(E\times_G E')  \ar[d]^{\pi_*} \\
E \ar[r]^{\pi_*} & E_0, 
}\]
where $\pi_*$ denotes the pushforward via $\pi:H\to H/Z$. Note that $H=H\times\{1\}$ is normal in $E\times_G E'$ (use for instance Proposition \ref{prop prod fib}) and that the action induced on $H$ by conjugation factors through $E\to E_0$ since the kernel of this morphism is $Z\times Z$, which clearly commutes with $H\times\{1\}$. We see then that the arrows $E\to E_0$ and $\mu_*(E\times_G E')\to E_0$ from the diagram above define an isomorphism inducing the same action on $H$ as needed.\\

One can check of course that this construction gives isomorphic extensions if we start with isomorphic extensions. We have thus defined a map
\[\begin{array}{ccl}
    \ext(G,H,E) \times \ext(G,Z,E) & \to & \ext(G,H,E),\\
    ([E],[E']) & \mapsto & [E]\cdot [E']:=[\mu_*(E\times_G E')],
\end{array}\]
which, when taking $H=Z$, recovers the classical group law on $\ext(G,Z,E)$, whose trivial element corresponds to the split extension $Z\rtimes G$. In general, we get an action of the group $\ext(G,Z,E)$ on the set $\ext(G,H,E)$. Checking this is an easy exercise using fiber products with three factors (and pushing forward via multiplication). We give some details in the proof of Theorem \ref{main thm} here below.

\section{Main Result}
We keep notations as in last section. In order to prove Theorem \ref{main thm intro}, we are only left to prove the following:

\begin{thm}\label{main thm}
Let $G,H$ be algebraic groups over a field $k$, let $Z$ denote the center of $G$ and let
\[1\to H\to E\to G\to 1,\]
be an extension. Then $\ext(G,Z,E)$ acts simply transitively on the set $\ext(G,H,E)$.
\end{thm}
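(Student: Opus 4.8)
The action of $\ext(G,Z,E)$ on $\ext(G,H,E)$ has already been constructed in Section~\ref{sec action of exts}, so the plan is to fix a base point $E_1\in\ext(G,H,E)$ and prove that the orbit map
\[\Phi_{E_1}\colon \ext(G,Z,E)\to\ext(G,H,E),\qquad [E']\mapsto [E_1]\cdot[E']=[\mu_*(E_1\times_G E')],\]
is a bijection. Since $\ext(G,Z,E)$ is abelian, the stabilizers of the action all coincide, so that bijectivity of a single orbit map is equivalent to simple transitivity: surjectivity is transitivity, while injectivity forces the stabilizer of $E_1$ — hence every stabilizer — to be trivial.

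For transitivity, given a second extension $E_2$ inducing the same outer action as $E_1$, I would use Definition~\ref{defi action ext} to produce the common quotient $E_0$ of $G$ by $H/Z$ together with the two maps $E_i\to E_0$, form $E:=E_1\times_{E_0}E_2$ (whose kernel is $H\times_{H/Z}H$ by Proposition~\ref{prop prod fib}), and set $E':=E/\Delta(H)$. By Lemma~\ref{lem Delta H normal} the subgroup $\Delta(H)$ is normal, so $E'$ is an extension of $G$ by $(H\times_{H/Z}H)/\Delta(H)\cong Z$, and a short conjugation computation shows it induces the reference $G$-action on $Z$, so that $[E']\in\ext(G,Z,E)$. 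The key point is then the isomorphism $E_1\times_G E'\cong E$ given by $(p_1,q)$, the first projection paired with the quotient $q\colon E\to E'$: on kernels this is the identification $H\times Z\cong H\times_{H/Z}H$, $(h,z)\mapsto(h,hz)$. Under it the multiplication $\mu\colon H\times Z\to H$ corresponds to the second projection $H\times_{H/Z}H\to H$, whence $\mu_*(E_1\times_G E')\cong E_2$ and $[E_1]\cdot[E']=[E_2]$.

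For freeness, suppose $[E_1]\cdot[E']=[E_1]$, i.e.\ there is an isomorphism of extensions $\mu_*(E_1\times_G E')\cong E_1$. Writing $W:=E_1\times_G E'$ and composing $W\twoheadrightarrow\mu_*(W)\xrightarrow{\sim}E_1$ yields a surjection $\alpha\colon W\to E_1$ which is $(h,z)\mapsto hz$ on the kernel $H\times Z$. I would then consider $(p_1,\alpha)\colon W\to E_1\times_G E_1$, which on kernels is $(h,z)\mapsto(h,hz)$; its image is $H\times_{H/Z}H$, so $(p_1,\alpha)$ identifies $W$ with the subextension $E_1\times_{E_1/Z}E_1$, under which the projection $W\to E'$ becomes the quotient by $\Delta(H)$. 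Thus $E'\cong(E_1\times_{E_1/Z}E_1)/\Delta(H)$, and this extension splits: the diagonal $\Delta(E_1)\hookrightarrow E_1\times_{E_1/Z}E_1$ descends to a homomorphism $\Delta(E_1)/\Delta(H)\cong G\to E'$ that is a section of $E'\to G$. Hence $[E']=0$ and $\Phi_{E_1}$ is injective.

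The main obstacle throughout is bookkeeping rather than any single deep idea: each step hinges on identifying the kernel of a fiber product or of a pushforward correctly via Proposition~\ref{prop prod fib}, and on checking that the relevant $G$-actions on $Z$ and $H$ agree, so that the pushforwards are legitimate and $E'$ genuinely represents a class in $\ext(G,Z,E)$. Conceptually the least routine step is the freeness argument, where recognizing $W$ as the ``double'' $E_1\times_{E_1/Z}E_1$ and splitting $E'$ through the diagonal of $E_1$ is exactly what makes the stabilizer trivial. I would also record, as the construction of Section~\ref{sec action of exts} anticipates, that $\Phi$ is compatible with the Baer sum on $\ext(G,Z,E)$ — i.e.\ that we really have a group action — which follows from the same manipulations applied to the triple fiber product $E_1\times_G E'\times_G E''$.
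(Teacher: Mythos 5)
Your transitivity argument coincides with the paper's proof: you form $E_{12}=E_1\times_{E_0}E_2$, invoke Lemma~\ref{lem Delta H normal}, and take the quotient by $\Delta(H)$ — the paper packages this quotient as the pushforward along $\nabla\colon H\times_{H/Z}H\to Z$, $(h_1,h_2)\mapsto h_1^{-1}h_2$, whose kernel is $\Delta(H)$ — and your verification that $[E_1]\cdot[E']=[E_2]$ via the isomorphism $E_{12}\cong E_1\times_G E'$ and the observation that $\mu$ corresponds to $p_2$ (the paper's identity $\mu\circ(p_1,\nabla)=p_2$ and its map $\psi$) is exactly the paper's computation. Where you genuinely diverge is simple transitivity. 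The paper never assumes a stabilizing isomorphism: it shows that for an \emph{arbitrary} $E'$ the difference construction applied to the pair $\bigl(E_1,\mu_*(E_1\times_G E')\bigr)$ returns $[E']$, via an associativity computation on the triple fiber product $E_1\times_{E_0}E_1\times_G E'$ (using $\nabla\circ(\id,\mu)=\mu\circ(\nabla,\id)$) and the identification $E_1\times_{E_0}E_1\cong Z\rtimes E_1$, so that $\nabla_*(E_1\times_{E_0}E_1)\cong Z\rtimes G$ is the trivial class. Your route instead computes the stabilizer of $[E_1]$ directly; note that your splitting of $(E_1\times_{E_0}E_1)/\Delta(H)$ by the image of $\Delta(E_1)$ is precisely the paper's identification $\nabla_*(E_1\times_{E_0}E_1)\cong Z\rtimes G$, so the two proofs share their key computation, but yours avoids the triple fiber product, which is a real economy.

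One step in your freeness argument is under-justified. From the fact that $(p_1,\alpha)$ restricts on kernels to $(h,z)\mapsto(h,hz)$ with image $H\times_{H/Z}H$, you conclude that $(p_1,\alpha)$ identifies $W$ with $E_1\times_{E_0}E_1$. But containment of the image in the fiber product over $E_0$ is the assertion that $\pi\circ\alpha=\pi\circ p_1$ as maps $W\to E_0$, i.e.\ that the given isomorphism $f\colon\mu_*(W)\xrightarrow{\sim}E_1$ commutes with the canonical projections to $E_0$. A priori $f$ matches these projections only up to an automorphism $\theta$ of the extension $E_0$, and if $\theta\neq\id$ the image of $(p_1,\alpha)$ would be the twisted subgroup $\{(x,y)\mid \pi(y)=\theta(\pi(x))\}$, through which your diagonal section does not pass. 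The gap is fillable with the paper's own tools: since $f$ is the identity on $H$, it intertwines the conjugation actions on $H$, so $\theta$ is compatible with the $E_0$-action on $H$; by Proposition~\ref{prop aut ext}, $\theta$ comes from a Hochschild cocycle $\phi\colon G\to Z(H/Z)$, and compatibility forces each $\phi(g)$ to act trivially on $H$ by conjugation — but the conjugation action of $H/Z$ on $H$ is faithful exactly because $Z$ is the full schematic center of $H$, whence $\phi=1$ and $\theta=\id$. (A rigidity statement of the same kind is implicitly used by the paper when it treats the arrows $E_i\to E_0$ as canonical and the difference construction as well defined on isomorphism classes, so you are in good company, but in your proof this is the one point where the argument would actually break without it.)
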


\begin{proof}
Let $E_1,E_2$ be extensions of $G$ by $H$ inducing the same outer action as $E$. We need to find an extension $E'$ of $G$ by $Z$ inducing the same action as $E$ and such that $[E_1]\cdot [E']=[E_2]$.

By definition, both extensions come with a natural arrow $E_i\to E_0$, where $E_0$ is the extension of $G$ by $H/Z$ obtained by quotienting $Z\subset H\subset E_i$. Recalling Lemma \ref{lem Delta H normal}, we can consider the fiber product $E_{12}:=E_1\times_{E_0}E_2$, which fits into an exact sequence
\[1\to H\times_{H/Z} H\to E_{12}\to G\to 1,\]
and see that $\Delta(H)\subset H\times_{H/Z} H$ is normal in $E_{12}$. In particular, $\Delta(H)$ is normal in $H\times_{H/Z} H\subset H\times H$ and thus the composition
\[\nabla:H\times_{H/Z} H\xrightarrow{\iota\times\id} H\times H\xrightarrow{\mu} Z,\]
where $\iota$ denotes the inversion morphism, is a well-defined surjective morphism with kernel $\Delta(H)$. And knowing that $\Delta(H)$ is normal in $E_{12}$, we may consider the pushforward extension
\[1\to Z\to\nabla_*(E_{12})\to G\to 1.\]
Note that the action of $G$ on $Z$ induced by $\nabla_*(E_{12})$ is the same as the one induced by $E_1$ and $E_2$ since the restriction $\nabla:Z\times Z\to Z$ is clearly $G$-equivariant.

We claim now that $E':=\nabla_*(E_{12})$ is the extension we are looking for. Indeed, by construction, $E'$ fits in the commutative diagram:
\[\xymatrix{
E_{12} \ar[r] \ar[d] & E'=E_{12}/\Delta(H) \ar[d] \\
E_1 \ar[r] & G,
}\]
where the natural projection $E_{12}\to E_1$ also corresponds to the pushforward via the projection $p_1:H\times_{H/Z}H\to H$ on the first factor. Thus, we get a canonical morphism $\psi:E_{12}\to E_1\times_G E'$. Using Proposition \ref{prop prod fib} once again one checks that $\psi$ is in fact an isomorphism fitting in the following commutative diagram
\[\xymatrix{ 1 \ar[r] & H\times_{H/Z} H \ar[r] \ar[d]_{(p_1,\nabla)} & E_{12} \ar[r] \ar[d]_{\psi} & G \ar[r] \ar@{=}[d] & 1\\
1 \ar[r] & H\times Z \ar[r] & E_1\times_G E' \ar[r] & G \ar[r] & 1.
}\]
In particular, since $E'$ and $E_1$ induce the same action on $Z$, the pushforward of $E_{12}$ via the multiplication morphism $\mu:H\times Z\to Z$ is well defined, as we saw in section \ref{sec action of exts}. Moreover, by construction we can see that $m\circ (p_1,\nabla)=p_2$. This tells us that $\mu_*(p_1,\nabla)_*(E_{12})=\mu_*(E_1\times_G E')$ is canonically isomorphic to $E_2=(p_2)_*(E_{12})$ as an extension of $G$ by $H$, which means precisely that $[E_1]\cdot [E']=[E_2]$, as claimed. This proves the transitivity of the action.\\

Let us prove now the simple transitivity. In order to do this, we will simply show that, given an extension $E'$ of $G$ by $Z$ representing an element in $\ext(G,Z,E)$, the construction above applied to $E_1$ and $E_2:=\mu_*(E_1\times_G E')$ for $\mu:H\times Z\to Z$ gives back an extension isomorphic to $E'$.

Let us start then by noting that there is a natural map $E_1\times_G E'\to E_0$, which factors both through the projection $p_1:E_1\times_G E'\to E_1$ and $\mu_*:E_1\times_G E'\to E_2$. It is easy to see that there is a canonical isomorphism
\[E_1\times_{E_0}(E_1\times_G E')\simeq (E_1\times_{E_0} E_1)\times_G E'.\]
Indeed, by Proposition \ref{prop prod fib} both groups can be seen as the same subgroup of $E_1\times E_1\times E'$.

Since $E_1\times_G E'\to E_0$ factors through $\mu_*$, there is a natural pushforward morphism
\[\xymatrix{ 1 \ar[r] & H\times_{H/Z} H\times Z \ar[r] \ar[d]_{(\id,\mu)} & E_1\times_{E_0}E_1\times_G E' \ar[r] \ar[d]_{(\id,\mu_*)} & G \ar[r] \ar@{=}[d] & 1\\
1 \ar[r] & H\times_{H/Z} H \ar[r] & E_1\times_{E_0}E_2 \ar[r] & G \ar[r] & 1.
}\]
And thus the construction from the first part of the proof corresponds to the pushforward of this extension via $\nabla:H\times_{H/Z}H\to Z$, which is surjective and has kernel $\Delta(H)$ as we saw before.

On the other hand, it is easy to see (as we saw in the first part of the proof) that $\Delta(H)$ is normal in $E_1\times_{E_0} E_1$. This means that we may pushforward
\[1\to H\times_{H/Z}H\to E_1\times_{E_0} E_1\to G\to 1,\]
via $\nabla$ in order to induce a pushforward morphism
\[\xymatrix{ 1 \ar[r] & H\times_{H/Z} H\times Z \ar[r] \ar[d]_{(\nabla,\id)} & E_1\times_{E_0}E_1\times_G E' \ar[r] \ar[d]_{(\nabla_*,\id)} & G \ar[r] \ar@{=}[d] & 1\\
1 \ar[r] & H\times Z \ar[r] & \nabla_*(E_1\times_{E_0}E_1)\times_{G}E' \ar[r] & G \ar[r] & 1.
}\]
The key fact now is that we can pushforward this last extension via $\mu:H\times Z\to H$ (the reader can easily check that the kernel of $\mu$ is indeed normal in this last extension) and that the morphisms $\nabla\circ (\id,\mu)$ and $\mu\circ (\nabla,\id)$ are \emph{the same}. In particular, this pushforward is once again the one from the construction on the first part of the proof, but it represents at the same time the class \[[\nabla_*(E_1\times_{E_0}E_1)]+[E']\in\ext(G,Z,E).\]
However, it is easy to see that
\[E_1\times_{E_0}E_1=(Z\times\{1\})\rtimes\Delta(E_1)\cong Z\rtimes E_1,\]
where $\Delta$ denotes the diagonal embedding. Indeed, one can verify using Proposition \ref{prop prod fib} that $\Delta(E_1)\cap (Z\times\{1\})=\{1\}$, that $\Delta(E_1)$ normalizes $(Z\times\{1\})$ and that both generate $E_1\times_{E_0}E_1$. Thus, the kernel of $\nabla_*$ is identified with the subgroup $\{1\}\times H\subset Z\rtimes E_1$ and hence $\nabla_*(E_1\times_{E_0}E_1)$ is simply the split extension $Z\rtimes G$. We deduce that $[\nabla_*(E_1\times_{E_0}E_1)]+[E']=[E']$ and we get then an extension isomorphic to $E'$, which concludes the proof.
\end{proof}

\section{The set of split extensions}
Let $1\to H\to E\to G\to 1$ be an extension. Besides the structure of a principal homogeneous space under $\ext(G,Z,E)$, the set $\ext(G,H,E)$ has a distinguished subset corresponding to the classes of split extensions. We briefly study this set in this section.\\

Note that split extensions have by definition a scheme-theoretic section. Such extensions can be studied by using cocycles, so we use the cocyclic definition of nonabelian cohomology given by Demarche in \cite[\S2]{DemarcheHochschild}. Define then, for $H$ an algebraic group over a field $k$ with a $G$-group structure, the set $H^1_\coc(G,Z)$ as the quotient of the set of crossed morphisms
\[\{a:G\to H\mid a(gh)=a(g)\cdot {}^ga(h)\},\]
by the action of $H(k)$ sending $a$ to $(g\mapsto h\cdot a(g)\cdot {}^gh^{-1})$ for $h\in H(k)$.

Let us start with the existence of a (class of) split extensions in $\ext(G,H,E)$. Since all such extensions have the same quotient $E_0:=E/Z$, we immediately see that in order to have split extensions, we must have $E_0$ to be split as an extension of $G$ by $H/Z$. On the other hand, if we assume $E_0$ to be split, we may choose a section $s:G\to E_0$. Seeing $E$ as an extension of $E_0$ by $Z$ and pullbacking via $s$, we get the following commutative diagram
\begin{equation}\label{eq diag delta}
\xymatrix{
1 \ar[r] & Z \ar[r] \ar@{=}[d] & E_s \ar[r] \ar[d] & G \ar[r] \ar[d]^s & 1 \\
1 \ar[r] & Z \ar[r] \ar[d] & E \ar[r] \ar@{=}[d] & E_0 \ar[r] \ar[d] & 1 \\
1 \ar[r] & H \ar[r] & E \ar[r] & G \ar[r] & 1,
}
\end{equation}
which tells us that we can see $E_s$ as a subgroup of $E$. One easily checks that if we change $s$ by a conjugate section (over the base field), then we get an extension isomorphic to $E_s$. Now, if we fix a particular section $s_0:G\to E_0$, it induces a natural $G$-group structure on $H/Z$ and we know that sections $G\to E_0$ up to conjugacy are classified by $H^1_\coc(G,H/Z)$ for this $G$-group structure (cf.~\cite[Prop.~2.2.2]{DemarcheHochschild}). We have thus defined a map
\begin{align*}
    \delta:H^1_\coc(G,H/Z)&\to\ext(G,Z,E)\\
    \alpha=[s] &\mapsto [E_s],
\end{align*}
with which we may define a second map via the action on $\ext(G,H,E)$
\begin{align*}
    \delta_E:H^1_\coc(G,H/Z) &\to\ext(G,H,E)\\
    \alpha &\mapsto (-\delta(\alpha))\cdot [E].
\end{align*}
Note that if we change the choice of the section $s_0$ for some $s_1:G\to E_0$, then we would get a twisted set $H^1_\coc(G, {}_{s_1}(H/Z))$, where ${}_{s_1}(H/Z)$ is simply $H/Z$ with the ``twisted'' $G$-group structure obtained by conjugation via $s_1(G)\subseteq E_0$. We leave to the reader to check that the twisting bijection $H^1_\coc(G, {}_{s_1}(H/Z))\xrightarrow{\sim} H^1_\coc(G,H/Z)$, which is defined analogously to the classical group cohomology twist (cf.~\cite[I.5.3]{SerreCohGal}), commutes with the construction of the $\delta$ morphism. This implies in particular that the images of $\delta$ and $\delta_E$ are independent of the choice of the fixed section $s_0$. Moreover, if we change $E$ by another extension $E'$ inducing the same outer action, then we get a map $\delta_{E'}$, which we claim coincides with $\delta_E$. Indeed, we may write $[E']=\zeta\cdot [E]$ for some class $\zeta\in\ext(G,Z,E)$, meaning that $E'$ can be obtained by fiber products and quotients from $E$. Applying this construction simultaneously to the upper and lower lines of diagram \eqref{eq diag delta}, which preserves commutativity, we see that the corresponding map $\delta_{E'}$ sends $\alpha=[s]$ to \[(-(\delta(\alpha)+\zeta))\cdot [E']=(-\delta(\alpha))\cdot(-\zeta)\cdot [E']=(-\delta(\alpha))\cdot [E]=\delta_E(\alpha).\]

We have then the following result, which describes the set of (classes of) split extensions.

\begin{pro}
An extension in $\ext(G,H,E)$ is split if and only if it lies in the image of $\delta_E$. In particular, the set of split extensions is non-empty if and only if $E_0=E/Z$ is split as an extension of $G$ by $H/Z$.
\end{pro}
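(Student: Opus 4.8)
The plan is to reduce the splitness of an extension $E_1\in\ext(G,H,E)$ to the splitness of a suitable pullback to $G$ of $E_1$ regarded as an extension of $E_0=E/Z$ by $Z$. Write $\pi_1:E_1\to G$ and $q_1:E_1\to E_0$ for the two projections, so that $\pi_1$ factors as $E_1\xrightarrow{q_1}E_0\xrightarrow{p}G$. For any section $s:G\to E_0$ of $p$, I would consider the pullback
\[1\to Z\to s^*(E_1)\to G\to 1,\]
where $E_1$ is viewed as an extension of $E_0$ by $Z$; concretely $s^*(E_1)=q_1^{-1}(s(G))$, with projection to $G$ the restriction of $\pi_1$. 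First I would prove that $E_1$ is split if and only if there is a section $s$ of $p$ with $s^*(E_1)$ split, i.e. $[s^*(E_1)]=0$ in $\ext(G,Z,E)$. For the ``if'' direction, a section $G\to s^*(E_1)$ composed with the inclusion $s^*(E_1)\hookrightarrow E_1$ is a section of $\pi_1$. For the ``only if'' direction, a section $t:G\to E_1$ of $\pi_1$ yields the section $s:=q_1\circ t$ of $p$, and since $q_1(t(g))=s(g)$ the map $t$ factors through $q_1^{-1}(s(G))=s^*(E_1)$, splitting it.

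Next I would express $[s^*(E_1)]$ through the torsor structure. By the simple transitivity of Theorem \ref{main thm} write $[E_1]=[E]\cdot\zeta$ for a unique $\zeta\in\ext(G,Z,E)$, and let $\alpha=[s]\in H^1_\coc(G,H/Z)$ (for the $G$-structure fixed by $s_0$), so that by definition $[s^*(E)]=[E_s]=\delta(\alpha)$. The key claim is
\[[s^*(E_1)]=\delta(\alpha)+\zeta.\]
To prove it I would first establish, at the level of extensions of $E_0$ by $Z$, the additivity identity $[E_1]=[E]+p^*\zeta$, where $p^*:\ext(G,Z,E)\to\ext(E_0,Z)$ is pullback along $p$ and $[E_1],[E]$ now denote the classes of $E_1,E$ as extensions of $E_0$ by $Z$. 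This is where the main work lies: writing $\zeta=[E']$ and $E_1=\mu_*(E\times_G E')$, I would identify $E\times_G E'$ with the fiber product $E\times_{E_0}p^*(E')$ (both are the same subgroup of $E\times E'$ by Proposition \ref{prop prod fib}, since $p\circ q_1=\pi_1$), and then check that the pushforward defining $E_1$ along $\mu:H\times Z\to H$ and the pushforward defining the Baer sum along the multiplication $Z\times Z\to Z$ are quotients by the very same antidiagonal copy of $Z$ inside $Z\times Z\subseteq H\times Z$. Granting this identity, I would pull back along $s$, using that pullback is a homomorphism for the Baer sum together with $p\circ s=\id$ (so $s^*p^*=\id$ on $\ext(G,Z,E)$), to get $[s^*(E_1)]=[s^*(E)]+\zeta=\delta(\alpha)+\zeta$.

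Finally I would assemble the equivalence. Combining the two steps, $E_1$ is split if and only if there is a class $\alpha\in H^1_\coc(G,H/Z)$ with $\delta(\alpha)+\zeta=0$, i.e. $\zeta=-\delta(\alpha)$, i.e. $[E_1]=[E]\cdot(-\delta(\alpha))=\delta_E(\alpha)$; this is exactly the assertion that $E_1$ is split iff $[E_1]\in\im(\delta_E)$. For the ``in particular'' part: if some extension in $\ext(G,H,E)$ is split, the ``only if'' half of the first step produces a section $s=q_1\circ t$ of $p$, so $E_0$ is split; conversely, if $E_0$ is split it admits a section $s_0$, whence $H^1_\coc(G,H/Z)\neq\varnothing$ and $\im(\delta_E)\neq\varnothing$, so $\ext(G,H,E)$ contains a split extension. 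The main obstacle is the additivity identity of the second paragraph; the remainder is formal manipulation of fiber products and pushforwards of the kind already used in the proof of Theorem \ref{main thm}.
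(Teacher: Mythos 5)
Your proposal is correct, but it takes a genuinely different route from the paper's proof. The paper argues by reductions resting on the two independence statements it establishes just before the proposition: since $\delta_{E'}=\delta_E$ for any $E'$ inducing the same outer action, a given split class may be assumed to be $[E]$ itself, and since the image of $\delta$ is independent of the chosen section of $E_0$, that section may be assumed to lift to $E$, making $\delta([s])$ trivial so that $[E]=\delta_E([s])$; conversely, classes in the image are shown to be split by applying the action of $-\delta([s])$ simultaneously to the two rows of diagram \eqref{eq diag delta}. You instead prove a quantitative statement the paper never makes explicit: writing $[E_1]=[E]\cdot\zeta$ via the simple transitivity of Theorem \ref{main thm}, you show $[s^*(E_1)]=\delta([s])+\zeta$ for every section $s$ of $p:E_0\to G$. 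Your derivation of the underlying additivity identity $[E_1]=[E]+p^*\zeta$ in $\ext(E_0,Z)$ is sound: $E\times_G E'$ and $E\times_{E_0}p^*(E')$ are the same subgroup of $E\times E'$ by Proposition \ref{prop prod fib} (as $\pi_1=p\circ q_1$), and the kernels of $\mu:H\times Z\to H$ and of the Baer multiplication $Z\times Z\to Z$ are the same antidiagonal copy of $Z$, so the two pushforwards yield literally the same group with the same projection to $E_0$ and the same kernel identification. Together with your elementary observation that $E_1$ splits if and only if $s^*(E_1)=q_1^{-1}(s(G))$ splits for some section $s$ (note that $s^*(E_1)$ does induce the $E$-action on $Z$, since the conjugation action of $E_0$ on $Z$ factors through $G$), and with Demarche's classification guaranteeing that every class in $H^1_\coc(G,H/Z)$ is realized by a section and that $\delta$ is constant on conjugacy classes, both directions of the equivalence drop out of a single formula. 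Your route buys self-containedness --- it does not use the independence of $\delta_E$ from the representative $E$, and in fact re-proves it, since the set of split classes visibly depends only on the outer action --- and the formula $[s^*(E_1)]=\delta([s])+\zeta$ essentially contains the exact sequence stated in the remark following the proposition. The paper's route buys brevity: with the independence lemmas already in place, it needs no Baer-sum compatibility such as the additivity of $s^*$, which you invoke without proof (it is standard, and in the spirit of the paper's fiber-product toolkit, but strictly speaking it is one more thing to check).
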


\begin{proof}
We already noticed that if the set of split extensions is non-empty, then $E_0$ is split. Assume then that $E_0$ is split and let us prove that there are split extensions in $\ext(G,H,E)$. Consider a section $s$ and the corresponding extension $E_s$. Applying simultaneously the action of $-\delta([s])=-[E_s]$ to the upper and lower lines of diagram \eqref{eq diag delta}, we get the same diagram for an extension $E'$ such that $[E']=(-\delta([s]))\cdot[E]$, but in which $E'_s$ is clearly split. If we denote by $s'$ the section $G\to E'$, since the composite $G\xrightarrow{s}E_0\to G$ is the identity, we have that $G\xrightarrow{s'} E'_s\to E'$ is a section of $E'$ and hence $E'$ is a split extension.

Note that this construction proves by the way that every class in the image of $\delta_E$ is split. Indeed, the extension $E'$ represents precisely $(-\delta([s]))\cdot[E]=\delta_E([s])$. Consider now a split extension in $\ext(G,H,E)$. Since the definition of $\delta_E$ does not depend on $E$, we may assume that our split extension is precisely $E$. Moreover, since the image of $\delta_E$ does not depend on the choice of the section $s:G\to E_0$, we may assume that this section comes from a section $G\to E$. We immediately see then that $\delta([s])$ is nothing but the trivial class $[Z\rtimes G]\in\ext(G,Z,E)$ and hence $[E]=(-\delta([s]))\cdot [E]=\delta_E([s])$.
\end{proof}

\begin{rem}
Assuming the existence of a split extension $E$, the arguments that precedes can be summarized as the existence of an exact sequence of pointed sets
\[H^1_\coc(G,H)\to H^1_\coc(G,H/Z)\xrightarrow{\delta} \ext(G,Z,E)\to \ext(G,H,E),\]
where the last map is defined as $\zeta\mapsto (-\zeta) \cdot[E]$ and the distinguished subset in $\ext(G,H,E)$ is precisely the set of split extensions. Details are left to the reader.
\end{rem}

\section{Application: a finiteness result}\label{section application}
We can now consider the special situation of algebraic groups over a finite field $\F$. In this context one could expect finiteness results such as the following one:

\begin{thm}
Let $G,H$ be algebraic groups over $\F$ and let
\[1\to H\to E\to G\to 1,\]
be an extension. Assume that $G$ is finite and \'etale. Then $\ext(G,H,E)$ is finite. 
\end{thm}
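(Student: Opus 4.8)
The plan is to reduce everything to the abelian case via Theorem~\ref{main thm} and then to squeeze $\ext(G,Z,E)$ between two finite cohomology groups. First, since $\ext(G,H,E)$ contains the class $[E]$ it is non-empty, and by Theorem~\ref{main thm} the group $\ext(G,Z,E)$ acts on it simply transitively. Hence $\zeta\mapsto [E]\cdot\zeta$ is a bijection $\ext(G,Z,E)\xrightarrow{\sim}\ext(G,H,E)$, so it suffices to prove that $\ext(G,Z,E)$ is finite, where $Z$ is the (finite type, commutative) center of $H$ equipped with its $G$-action.

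Next I would separate the scheme-theoretically split extensions from the rest by means of the torsor underlying an extension. Forgetting the group law, any extension $1\to Z\to E'\to G\to 1$ yields a $Z$-torsor $E'\to G$ (for the translation action of $Z$), and taking its class defines a homomorphism
\[ r:\ext(G,Z,E)\longrightarrow H^1_{\mathrm{fppf}}(G,Z_G), \]
where $Z_G=Z\times_{\F}G$ is the pullback of $Z$ to $G$. Its kernel consists of the extensions whose underlying torsor is trivial, i.e.\ those admitting a scheme-theoretic section, and these are classified up to isomorphism by the Hochschild cohomology group $H^2_0(G,Z)$ (cf.~\cite[II, \S3]{DG}). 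Thus $r$ fits $\ext(G,Z,E)$ into an exact sequence
\[ 0\to H^2_0(G,Z)\to\ext(G,Z,E)\xrightarrow{\ r\ } H^1_{\mathrm{fppf}}(G,Z_G), \]
and it remains only to see that the outer two groups are finite.

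This is where the hypotheses ``$G$ finite \'etale'' and ``$\F$ finite'' enter, and it is the heart of the matter. Writing $G=\spec(\prod_j L_j)$ with each $L_j/\F$ a finite field extension, the powers $G^i$ are again finite \'etale, so the Hochschild $i$-cochains $\hom_{\F}(G^i,Z)=Z(G^i)$ are finite, being finite products of sets $Z(L)$ of rational points of a finite type scheme over a finite field. Hence $H^2_0(G,Z)$, a subquotient of a finite group, is finite. Likewise $H^1_{\mathrm{fppf}}(G,Z_G)=\prod_j H^1_{\mathrm{fppf}}(L_j,Z)$, and each factor is finite because the flat cohomology of a finite type commutative group scheme over a finite field is finite; this is the essential external input, which I would isolate as a lemma and cite. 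Combining the two, $\ext(G,Z,E)$ is finite, and therefore so is $\ext(G,H,E)$.

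I expect the main obstacle to be the precise justification that $r$ is a homomorphism whose kernel is \emph{exactly} the scheme-split classes, so that this kernel is genuinely governed by $H^2_0(G,Z)$, together with pinning down a clean finiteness statement for $H^1_{\mathrm{fppf}}(L_j,Z)$ valid for non-smooth $Z$. It is worth stressing \emph{why} rationality over $\F$ is indispensable: the analogous groups over $\overline{\F}$ need not be finite at all (already extensions of $\mathbb{Z}/p$ by $\G_a$ form an infinite family over $\overline{\F}$), so the finiteness hinges on counting cochains and torsors with $\F$-rational, rather than geometric, coefficients.
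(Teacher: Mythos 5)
Your proposal is correct, and at the level of its skeleton it matches the paper's proof: both reduce to abelian coefficients via Theorem \ref{main thm} (your bijection $\ext(G,Z,E)\xrightarrow{\sim}\ext(G,H,E)$ is the paper's ``we may immediately assume that $H$ is abelian''), and both then squeeze the resulting abelian extension group into an exact sequence $0\to H^2_0\to\ext\to H^1$. In fact the sequence you construct by hand, including the kernel identification you flag as the main obstacle, is precisely the statement the paper quotes from \cite[XVII, App.~I.3.1]{SGA3} (stated there with \'etale $H^1$, as $G$ is \'etale), so that step can simply be cited rather than reproved. Where you genuinely diverge is in establishing the two finiteness statements. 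For the Hochschild part your argument is more elementary than the paper's: you observe that the full cochain groups $\hom_\F(G^n,Z)=Z(\mathcal{O}(G^n))$ are already finite, since $\mathcal{O}(G^n)$ is a finite product of finite fields and $Z$ is of finite type; this kills $H^i_0(G,Z)$ for all $i$ at once and completely bypasses the paper's two-step argument, namely the comparison $H^2_0(G,H)\cong H^2(G(\F),H(\F))$ of \cite[III.6, Prop.~4.2]{DG} for constant $G$ followed by a Galois-descent count of forms via $H^1(\F'/\F,Z_0^1(G_{\F'},H_{\F'}))$ and Proposition \ref{prop aut ext}. For the $H^1$ part you work with fppf cohomology and outsource the key input---finiteness of $H^1_{\mathrm{fppf}}(L,Z)$ for a commutative finite type group scheme over a finite field $L$---to a lemma you would cite; this is true and not hard to supply (over the perfect field $L$, $Z_{\red}$ is a smooth subgroup with finite $H^1$ by Lang's theorem \cite{LangCorpsFinis} and \cite[III, \S4.3, Thm.~4]{SerreCohGal}, while the infinitesimal quotient $Z/Z_{\red}$ is finite and embeds in some $\mathrm{GL}_n$, making its $H^1$ a quotient of the rational points of a finite type scheme), whereas the paper uses \'etale $H^1$ and cites Serre and Lang directly. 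Your fppf formulation is in fact the more robust of the two when $Z$ fails to be smooth (e.g.\ $Z=\mu_p$ or $\alpha_p$ in characteristic $p$), which is exactly the worry you raise at the end; so, modulo writing out the deferred finiteness lemma, your proof is complete.
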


\begin{proof}
Thanks to Theorem \ref{main thm}, we may immediately assume that $H$ is abelian, hence it has a natural $G$-action and the set $\ext(G,H,E)$ is simply $\ext(G,H)$ for this particular action. We argue then as in \cite[Lem.~3.1]{MichelExt}: since $G$ is \'etale, we can consider the exact sequence (cf.~\cite[XVII, App.~I.3.1]{SGA3})
\[0\to H^2_0(G,H)\to\ext(G,H)\to H^1_{\text{\'et}}(G,H),\]
where $H^2_0(G,H)$ denotes the Hochschild cohomology group (defined in \textit{loc.~cit.}).
Again, since $G$ is \'etale, as a scheme it is just a finite union of spectra of finite fields. By \cite[III, Ex.~1.7 and Ex.~2.24]{Milne}, the finiteness of $H^1_{\text{\'et}}(G,H)$ follows then from the finiteness of $H^1(\F',H)$ for any finite field $\F'$, and this is well-known (cf.~\cite[III,\S4.3, Thm.~4]{SerreCohGal} and \cite{LangCorpsFinis}). We are reduced then to the finiteness of $H^2_0(G,H)$. Assume for now that $G$ is constant (i.e.~all of its points are defined over $\F$). Then by \cite[III.6, Prop.~4.2]{DG} we know that $H^2_0(G,H)$ is isomorphic to $H^2(G(\F),H(\F))$ in classic group cohomology. And since both $G(\F)$ and $H(\F)$ are finite groups, we get the finiteness in this case. If $G$ is not constant, there exists a finite extension $\F'/\F$ such that $G_{\F'}$ is constant and hence the set $\ext(G_{\F'},H_{\F'})$ is finite by the argument above. It suffices to prove then that there are finitely many $\F'/\F$-forms of a given extension of $G_{\F'}$ by $H_{\F'}$. Now these forms are in bijective correspondence with the cohomology group $H^1(\F'/\F,A)$ for the group $A:=Z_0^1(G_{\F'},H_{\F'})$ by \cite[III.1]{SerreCohGal} and Proposition \ref{prop aut ext}. But since $G_{\F'}$ is constant, $A$ is a subgroup of the group of functions $G(\F')\to H(\F')$, which is clearly finite. Since $\F'/\F$ is also finite, we get the finiteness of $H^1(\F'/\F,A)$, which concludes the proof.
\end{proof}


\begin{thebibliography}{ABC00}

\bibitem[Bor93]{Borovoi93}
{\bf M.~Borovoi.}
Abelianization of the second nonabelian {G}alois cohomology. {\it Duke Math. J.} 72(1), 217--239, 1993.

\bibitem[Bri15]{MichelExt}
{\bf M.~Brion.} On extensions of algebraic groups with finite quotient. {\it Pacific J. Math.} (Robert Steinberg Memorial Issue) 279, 135--153, 2015.

\bibitem[Dem15]{DemarcheHochschild}
{\bf C.~Demarche.} Cohomologie de Hochschild non ab\'elienne et extensions de
faisceaux en groupes. {\it Autour des sch\'emas en groupes. Vol. II}, 255--292, Panor. Synth{\`e}ses, 46, {\it Soc. Math. France}, Paris, 2015.

\bibitem[DG70]{DG}
{\bf M.~Demazure, P.~Gabriel.} {\it Groupes alg{\'e}briques. Tome I: G{\'e}om{\'e}trie alg{\'e}brique, g{\'e}n{\'e}ralit{\'e}s, groupes
commutatifs.} Masson \& Cie, Paris; North-Holland Publishing Co., Amsterdam, 1970.

\bibitem[Lan56]{LangCorpsFinis}
{\bf S.~Lang.} Algebraic groups over finite fields. {\it Amer. J. Math.} 78, 555--563, 1956.

\bibitem[LA17]{GLA-Ext}
{\bf G.~Lucchini {A}rteche.} Extensions of algebraic groups with finite quotient and nonabelian 2-cohomology. {\it J. Algebra} 492, 102--129, 2017.

\bibitem[Mac95]{MacLane}
{\bf S.~Mac Lane} {\it Homology}. Reprint of the 1975 edition. Classics in Mathematics. Springer-Verlag, Berlin, 1995.

\bibitem[Mil80]{Milne}
{\bf J.~S.~Milne.} {\it \'{E}tale cohomology}. Princeton Mathematical Series, No. {\bf 33}. Princeton University Press, Princeton, N.J., 1980.

\bibitem[Mil17]{MilneAG}
{\bf J.~S.~Milne.} {\it {A}lgebraic {G}roups. {T}he theory of group schemes of finite type over a field}. Cambridge Studies in Advanced Mathematics, No. {\bf 170}. Cambridge University Press, Cambridge, 2017.

\bibitem[NSW08]{NSW}
{\bf J.~Neukirch, A.~Schmidt, K.~Wingberg.} {\it Cohomology of number fields}. Grundlehren der Mathematischen Wissenschaften, No. {\bf 323}. Springer-Verlag, Berlin, second edition, 2008.

\bibitem[SGA3]{SGA3}
{\it Sch\'emas en groupes}. S\'eminaire de G\'eom\'etrie Alg\'ebrique du Bois Marie 1962/64. Dirig\'e par M.~Demazure et A.~Grothendieck.
Lecture Notes in Mathematics, Nos.~{\bf 151}, {\bf 152}, {\bf 153}. Springer-Verlag, Berlin, 1970; Documents Math\'ematiques, Nos.~{\bf 7}, {\bf 8}, Soci\'et\'e Math\'ematique de France, Paris, 2011.

\bibitem[Ser02]{SerreCohGal}
{\bf J.-P.~Serre.} {\it Galois cohomology}. Springer Monographs in Mathematics. Springer-Verlag, Berlin, {E}nglish edition, 2002.

\bibitem[Spr66]{SpringerH2}
{\bf T.~A.~Springer.} Nonabelian {$H^{2}$} in {G}alois cohomology. {\it Algebraic {G}roups and {D}iscontinuous {S}ubgroups ({P}roc. {S}ympos. {P}ure {M}ath., {B}oulder, {C}olo., 1965)}, 164--182, {\it Amer. Math. Soc., Providence, RI}, 1966.

\end{thebibliography}
\end{document}